\documentclass{amsart}

%%%%%%%%%%%%%%%%%%%%%%%%%%%%%%%%%%%%%
%%%% Packages %%%%%%%%%%%%%%%%%%%%%%

\usepackage{amsmath}
\usepackage{amssymb}
\usepackage{amsfonts}
\usepackage{amsopn}
\usepackage{amsthm}
\usepackage[all]{xy}

%\usepackage{srcltx}  % to use jump back in dvi-viewer
%\usepackage{hyperref} %to generate pdf with links
%\usepackage[hypertex]{hyperref} %To generate dvi with links using latex
%\usepackage[notref,notcite]{showkeys} %to see labels in the margin

%%%%%%%%%%%%%%%%%%%%%%%%%%%%%%%%%%
%%% Theorem-style definitions %%%%%%%

\swapnumbers
\theoremstyle{plain}
\newtheorem{thm}{Theorem}[section]
\newtheorem{lem}[thm]{Lemma}

\newtheorem{prop}[thm]{Proposition}

\theoremstyle{definition}
\newtheorem{ntt}[thm]{}
\newtheorem{ex}[thm]{Example}
\newtheorem{rem}[thm]{Remark}
\newtheorem{dfn}[thm]{Definition}

% Definitions

\newcommand{\ii}{\mathsf{i}}            % adic valuation
\newcommand{\BB}{\mathfrak{B}}     % variety of complete flags twisted
\newcommand{\GG}{\mathfrak{G}}    % K_0 group
\newcommand{\res}{\mathop{res}}   % restriction map
\newcommand{\LL}{\mathcal{L}}       % associated line bundle
\newcommand{\zz}{\mathbb{Z}}       % integers
\newcommand{\qq}{\mathbb{Q}}      % rationals
\newcommand{\cc}{\mathfrak{c}}     % characteristic map
\DeclareMathOperator{\ind}{\mathrm{ind}}    % index of c.s.a
\DeclareMathOperator{\CH}{\mathrm{CH}}    % Chow group

\begin{document}

\title{Twisted $\gamma$-filtration of a linear algebraic group.}

\author{Kirill Zainoulline}

%\address{(Kirill Zainoulline) Department of Mathematics and Statistics, University of Ottawa, 585~King Edward, Ottawa ON %K1N6N5, Canada}
%\urladdr{http://www.mathstat.uottawa.ca/{\textasciitilde}kirill/}

%%%%%%%%%%%%%%%%%%%%%%%%%%%%%%%%%%%%%%%%%%%%

\begin{abstract}
In the present notes we introduce and study the twisted $\gamma$-filtration on $K_0(G_s)$,
where $G_s$ is a split simple linear algebraic group over a field $k$ of characteristic prime
to the order of the center of $G_s$. We apply this filtration to construct torsion elements
in the $\gamma$-ring of the variety of complete $G$-flags, where $G$ is an inner form of $G_s$.
\end{abstract}

\maketitle

%%% Document STARTS here

\section{Introduction}

Let $X$ be a smooth projective variety
over a field $k$. Consider the Grothendieck $\gamma$-filtration on $K_0(X)$. 
It is given by the
ideals \cite[\S2.3]{SGA6} (see also \cite[\S2]{Ka98})
$$
\gamma^iK_0(X)=
\langle c_{n_1}(b_1)\cdot \ldots \cdot c_{n_m}(b_m) \mid
n_1+\ldots + n_m\ge i,\; b_1,\ldots,b_m\in K_0(X)\rangle,\; i\ge 0
$$
generated by products of Chern classes in $K_0$.
Let
$\gamma^{i}(X)$ denote the $i$-th subsequent quotient and let 
$\gamma^*(X)=\oplus_{i\ge 0}\gamma^{i}(X)$ denote the associated graded commutative ring 
called the $\gamma$-ring of $X$.

The ring $\gamma^*(X)$  was invented by Grothendieck to approximate the topological filtration on $K_0$ and, hence,
the Chow ring $\CH^*(X)$ 
of algebraic cycles modulo rational equivalence.
Indeed,  by the Riemann-Roch theorem (see \cite[\S2]{SGA6})
the $i$-th Chern class $c_i$ induces an isomorphism with 
$\qq$-coefficients, i.e.
$c_i\colon \gamma^i(X;\qq)\stackrel{\simeq}\to \CH^i(X;\qq)$.
Moreover, in some cases the ring $\gamma^*(X)$ can be used to compute $\CH^*(X)$, 
e.g. $\gamma^1(X)=\CH^1(X)$ and there is a surjection
$\gamma^2(X)\twoheadrightarrow \CH^2(X)$ (see \cite[Ex.~15.3.6]{Fu}).

In the present notes we provide 
a uniform lower bound for the torsion part of $\gamma^*(X)$, 
where $X={}_\xi \BB_s$ is a twisted form of the variety of Borel subgoups $\BB_s$ of a split simple
linear algebraic group $G_s$ by means of a cocycle $\xi\in H^1(k,G_s)$. 
Note that the groups $\gamma^2(X)$ and $\gamma^3(X)$ 
have been studied for $G_s=PGL_n$  in \cite{Ka98} and for strongly inner forms  in \cite{GZ10}. In particular,
it was shown in \cite[\S3,7]{GZ10} that  in the strongly inner case the torsion part of $\gamma^2(X)$ determines the Rost invariant.

Our main tool is the twisted $\gamma$-filtration on $K_0(G_s)$,
where $G_s$ is a split simple linear algebraic group.
Roughly speaking, it is defined to be the image (see Definition~\ref{twfilt})
of the $\gamma$-filtration on $K_0$ of the twisted form $X$
under the composition $K_0(X)\to K_0(\BB_s)\to K_0(G_s)$, where 
the first map is given by the restriction and the second map is induced by taking the quotient. 

Let $\gamma_\xi^*$ denotes the associated graded ring of the twisted $\gamma$-filtration.
It has the following important properties: 
\begin{itemize}
\item[(i)] The ring $\gamma_\xi^*$ can be explicitly computed (see Theorem~\ref{twfiltcomp}). 
Observe that $\gamma_\xi^0=\zz$, $\gamma_\xi^1=0$ and
$\gamma_\xi^i$ is torsion for $i>1$.
\item[(ii)] There is
a surjective ring homomorphism $\gamma^*(X)\twoheadrightarrow \gamma_\xi^*$. 
Hence, $\gamma_\xi^*$ gives a lower bound for the $\gamma$-ring of the twisted form $X={}_\xi \BB_s$.
\item[(iii)] The assignment $\xi\mapsto \gamma_\xi^*$ respects the base change 
and, therefore, is an  invariant of a $G_s$-torsor $\xi$, moreover, 
the ring $\gamma_\xi^*$ can be viewed as a substitute for the $\gamma$-ring 
of the inner group  ${}_\xi G_s$.

\end{itemize}
In the last section we use these properties to construct nontrivial torsion elements
in $\gamma^2(X)$ for some twisted flag varieties $X$ (see \ref{hspinex} and \ref{e7ex}).
In particular, we establish the connection between the indexes of the Tits algebras of $\xi$ and the order
of the special cycle $\theta \in \gamma^2(X)$ constructed in \cite{GZ10}.

%%%%%%%%%%%%%%%%%%%%%%%%%%%%%%%%%
%%%%%%%%%%%%%%%%%%%%%%%%%%%%%%%%%
%%%%%%%%%%%%%%%%%%%%%%%%%%%%%%%%%

\section{Preliminaries.}

In the present section we recall several basic facts concerning linear algebraic groups, characters
and the Grothendieck $K_0$ (see \cite[\S24]{BI}, \cite[\S1B,\S6]{GZ10}).

\begin{ntt} Let $G_s$ be a split simple linear algebraic group of rank $n$
over a field $k$. We assume that characteristic of $k$ is prime to the order of the center of $G_s$.
We fix a split maximal torus $T$
and a Borel subgroup $B$ such that $T\subset B\subset G_s$.

Let $\Lambda_r$ and $\Lambda$ be the root and the weight lattices
of the root system of $G_s$ with respect to $T\subset B$. 
Let $\{\alpha_1,\ldots,\alpha_n\}$ be a set of simple roots
(a basis of $\Lambda_r$)
and let 
$\{\omega_1,\ldots,\omega_n\}$ be the respective 
set of fundamental weights (a basis of $\Lambda$), i.e.
$\alpha_i^\vee(\omega_j)=\delta_{ij}$.
The group of characters $T^*$ of $T$ is an intermediate lattice
$\Lambda_r\subset T^*\subset \Lambda$
that determines the isogeny class of $G_s$.
If $T^*=\Lambda$, then the group $G_s$ is
simply connected and if $T^*=\Lambda_r$ it is adjoint.
\end{ntt}

\begin{ntt}
Let $\mathbb{Z}[T^*]$ be the integral group ring of $T^*$. 
Its elements are finite linear combinations 
$\sum_i a_ie^{\lambda_i}$, $\lambda_i\in T^*$. 
Let $\BB_s$ denote the variety of Borel subgroups $G_s/B$ of $G_s$.
Consider the characteristic map for $K_0$ (see \cite[\S2.8]{De74})
$$
\cc\colon \zz[T^*] \to K_0(\BB_s)
$$
defined by sending $e^\lambda$, $\lambda\in T^*$, to the class
of the associated line bundle $[\LL(\lambda)]$.
Observe that the ring $K_0(\BB_s)$ 
does not depend on the isogeny class of $G_s$
while the group of characters $T^*$ and, 
hence, the image of $\cc$ does. 

Since $K_0(\BB_s)$ is generated by the classes $[\LL(\omega_i)]$, $i=1\ldots n$,
the characteristic map $\cc$ is surjective if $G_s$ is simply connected.
If $G_s$ is adjoint, then the image of $\cc$ is generated by the classes
$[\LL(\alpha_i)]$, where
$$
\alpha_i=\sum_j c_{ij}\omega_j\;\text{ and, therefore, }\;\LL(\alpha_i)=\otimes_j \LL(\omega_j)^{\otimes c_{ij}},
$$
and $c_{ij}=\alpha^\vee_i(\alpha_j)$ are the coefficients of the Cartan matrix of $G_s$.
\end{ntt}

\begin{ntt}
The Weyl group $W$ of $G_s$ acts on weights
via simple reflections $s_{\alpha_i}$ as
$$
s_{\alpha_i}(\lambda)=\lambda - \alpha_i^\vee(\lambda)\alpha_i,\quad \lambda\in\Lambda.
$$
For each element $w\in W$ we define (cf. \cite[\S2.1]{St75})
the weight $\rho_w \in \Lambda$ as
$$
\rho_w=\sum_{\{i\in 1\ldots n \mid w^{-1}(\alpha_i)<0\}} w^{-1}(\omega_i).
$$
In particular,
for a simple reflection $w=s_{\alpha_j}$ we have
$$
\rho_w=\sum_{\{i\in 1\ldots n \mid s_{\alpha_j}(\alpha_i)<0\}} s_{\alpha_j}(\omega_i)=
s_{\alpha_j}(\omega_j)=
\omega_j-\alpha_j.
$$
\end{ntt}

Observe that the quotient $\Lambda/\Lambda_r$ 
coincides with the group of characters of the center of the 
simply connected cover of $G_s$.
Since $W$ acts trivially on $\Lambda/\Lambda_r$, we have
$$
\bar\rho_w=\sum_{\{i\in 1\ldots n \mid w^{-1}(\alpha_i)<0\}} \bar\omega_i\quad \in \Lambda/T^*,
$$
where $\bar \rho_w$ denotes the class of $\rho_w\in \Lambda$ modulo $T^*$. In particular, $\bar \omega_i=\bar\rho_{s_{\alpha_i}}$.

\begin{ntt}
Let $\zz[\Lambda]^W$ denote the subring of $W$-invariant elements.
Then the integral group ring $\zz[\Lambda]$ is a free $\zz[\Lambda]^W$-module
with the basis $\{e^{\rho_w}\}_{w\in W}$ (see \cite[Thm.2.2]{St75}). 
Now let 
$\epsilon\colon\zz[\Lambda]\to \zz,\; e^{\lambda}\mapsto 1$
be the augmentation map.
By the Chevalley Theorem the kernel of the surjection $\cc$ is generated
by elements $x\in \zz[\Lambda]^W$ such that $\epsilon(x)=0$. Hence,
there is an isomorphism
$$\zz[\Lambda]\otimes_{\zz[\Lambda]^W}\zz\simeq \zz[\Lambda]/\ker(\cc)\simeq 
K_0(\BB_s).$$
So the elements
$$
\{g_w=\cc(e^{\rho_w})=[\LL(\rho_w)]\}_{w\in W}
$$ 
form a $\mathbb{Z}$-basis
of $K_0(\BB_s)$ called the Steinberg basis.
\end{ntt}

\begin{ntt}\label{titsalg}
Following \cite{Ti71} we associate
with each $\chi \in \Lambda/T^*$ and each cocycle $\xi\in Z^1(k,G_s)$ the central simple algebra $A_{\chi,\xi}$
over $k$ called the Tits algebra. 
This defines a group homomorphism 
$$
\beta_\xi \colon \Lambda/T^* \to Br(k)\text{  with } 
\beta_\xi(\chi)=[A_{\chi,\xi}].
$$
Let $\BB={}_\xi \BB_s$ denote the twisted form of the variety of Borel
subgroups $\BB_s$ by means of $\xi$. 
Consider the restriction map on $K_0$ over the separable closure $k_{sep}$
$$
res\colon K_0(\BB)\to K_0(\BB\times_k k_{sep})=K_0(\BB_s),
$$
where we identify $K_0(\BB\times_k k_{sep})$ with $K_0(\BB_s)$.
By \cite[Thm.4.2]{Pa94}
the image of the restriction can be identified with the sublattice 
$$
\langle \imath_w\cdot g_w\rangle_{w\in W},
$$
where $g_w=[\LL(\rho_w)]$ is an element of the Steinberg basis and 
$\imath_w=\ind(\beta_\xi(\bar\rho_w))$ is the index of the respective Tits algebra.
Observe that if $G_s$ is simply connected, then all indexes $\imath_w$
are trivial and
the restriction map becomes an isomorphism.
\end{ntt}

%%%%%%%%%%%%%%%%%%%%%%%%%%%%%%%%%%%%%%%
%%%%%%%%%%%%%%%%%%%%%%%%%%%%%%%%%%%%%%%
%%%%%%%%%%%%%%%%%%%%%%%%%%%%%%%%%%%%%%%

\section{The $K_0$ of a split simple (adjoint) group}

In the present section we provide an explicit description of the ring $K_0(G_s)$ in terms of generators and relations 
for every simple split linear algebraic group $G_s$. 
The method to compute $K_0(G_s)$ was known before, however,  due to the lack of precise references we provide the computations here. 

\begin{dfn}\label{K0def} Let $\cc\colon \zz[\Lambda]\to K_0(\BB_s)$
be the characteristic map for the simply connected cover of $G_s$. We define the ring
$\GG_s$ to be the quotient
$$
\GG_s:=\zz[\Lambda/T^*]/\overline{(\ker \cc)}
$$
and the surjective ring homomorphism $q$ to be the composite
$$
\xymatrix{
q\colon K_0(\BB_s) \ar[r]^-{\cc^{-1}}_-{\simeq}& \zz[\Lambda]/(\ker\cc) \ar@{>>}[r] & \zz[\Lambda/T^*]/\overline{(\ker c)}=\GG_s.
}
$$
\end{dfn}

Observe that if $G_s$ is simply connected, then $\GG_s=\zz$.

\begin{rem}\label{kog} By \cite[Cor.33]{Me05} applied to $X=G_s$ and to the simply-connected cover $G=\hat G_s$
of $G_s$, there is an isomorphism
$$
K_0(G_s)\simeq \zz\otimes_{R(\hat G_s)} K_0(\hat G_s,G_s),
$$
where $R(\hat G_s)\simeq \zz[\Lambda]^W$ is the representation ring. By \cite[Cor.5]{Me05} applied to 
$G=\hat G_s$, $X=Spec\; k$ and $G/H=G_s$ there is an isomorphism
$$K_0(\hat G_s,G_s)\simeq R(H),$$ 
where $R(H)\simeq \zz[\Lambda/T^*]$ is the representation ring.
Therefore,
$$
K_0(G_s)\simeq \zz\otimes_{\zz[\Lambda]^W} \zz[\Lambda/T^*]\simeq \GG_s.
$$
\end{rem}

\begin{lem}\label{iddescr} The ideal $\overline{(\ker\cc)}\subset \zz[\Lambda/T^*]$ 
is generated by the elements
$$d_i(1-e^{\bar\omega_i}),\; i=1\ldots n,$$ 
where $d_i$ is the dimension of the $i$-th fundamental
representation. 
\end{lem}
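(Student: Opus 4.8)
The goal is to identify the ideal $\overline{(\ker\cc)}$ inside $\zz[\Lambda/T^*]$. My starting point is the Chevalley description of $\ker\cc$ recalled above: $\ker\cc$ is the ideal of $\zz[\Lambda]$ generated by the augmentation ideal of $\zz[\Lambda]^W$, i.e. by elements $x\in\zz[\Lambda]^W$ with $\epsilon(x)=0$. Passing to the quotient $\zz[\Lambda]\twoheadrightarrow\zz[\Lambda/T^*]$, the ideal $\overline{(\ker\cc)}$ is therefore generated by the images $\bar x$ of such $W$-invariant augmentation-zero elements. So the task reduces to understanding which elements of $\zz[\Lambda/T^*]$ arise as images of $W$-invariant elements of $\zz[\Lambda]$ with trivial augmentation, and to check that the resulting ideal coincides with $(d_i(1-e^{\bar\omega_i}) : i=1\ldots n)$.

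**The two inclusions.** First I would prove that each $d_i(1-e^{\bar\omega_i})$ lies in $\overline{(\ker\cc)}$. The natural candidate is the character $\chi_i\in\zz[\Lambda]^W$ of the $i$-th fundamental representation $V(\omega_i)$: it is $W$-invariant, and $\chi_i=\sum_{\lambda}m_\lambda e^\lambda$ where the weights $\lambda$ occurring all lie in a single coset of $\Lambda_r$ in $\Lambda$, namely the coset of $\omega_i$. Hence modulo $T^*$ (which contains $\Lambda_r$), every exponent collapses to $\bar\omega_i$, and $\bar\chi_i=(\sum_\lambda m_\lambda)e^{\bar\omega_i}=d_i\, e^{\bar\omega_i}$ in $\zz[\Lambda/T^*]$. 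The element $\chi_i - d_i$ then lies in $\zz[\Lambda]^W$ and has augmentation $\epsilon(\chi_i-d_i)=d_i-d_i=0$, so it lies in $\ker\cc$; its image in $\zz[\Lambda/T^*]$ is $d_i e^{\bar\omega_i}-d_i=-d_i(1-e^{\bar\omega_i})$. This gives one inclusion. For the reverse inclusion, I would argue that $\zz[\Lambda]^W$ is generated as a ring by $1$ together with the fundamental characters $\chi_1,\ldots,\chi_n$ (standard: $R(\hat G_s)\simeq\zz[\chi_1,\ldots,\chi_n]$), so its augmentation ideal is generated by the $\chi_i-d_i$ as an ideal of $\zz[\Lambda]^W$, hence $\ker\cc$ is generated as an ideal of $\zz[\Lambda]$ by the $\chi_i-d_i$. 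Applying the surjection $\zz[\Lambda]\to\zz[\Lambda/T^*]$, the ideal $\overline{(\ker\cc)}$ is generated by the images $-d_i(1-e^{\bar\omega_i})$, and we may drop the sign. Since $\bar\omega_i=\bar\rho_{s_{\alpha_i}}$ as noted in the excerpt, one could equivalently phrase the generators via the Steinberg basis.

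**The main obstacle.** The delicate point is the claim $\bar\chi_i=d_i e^{\bar\omega_i}$, i.e. that \emph{all} weights of the fundamental representation $V(\omega_i)$ are congruent to $\omega_i$ modulo $\Lambda_r$ (a fortiori modulo $T^*$). This is true — every weight of $V(\omega_i)$ is of the form $\omega_i$ minus a nonnegative integer combination of simple roots — but it is worth stating cleanly, because it is exactly what makes the mod-$T^*$ image of $\chi_i$ a scalar multiple of a single monomial, and hence what forces the generators to have the very simple form $d_i(1-e^{\bar\omega_i})$. The other point deserving a line of justification is that $\zz[\Lambda]^W=R(\hat G_s)$ is a polynomial ring on the fundamental characters — this is the classical theorem on representation rings of simply connected groups — from which the statement about generators of the augmentation ideal is immediate. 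With these two inputs the lemma follows formally.
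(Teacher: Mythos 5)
Your proof is correct and takes essentially the same route as the paper: invoke the Chevalley theorem to realize $\zz[\Lambda]^W$ as a polynomial ring over $\zz$, deduce that $\ker\cc$ is the ideal generated by the (augmentation-shifted) polynomial generators, and reduce everything modulo $T^*$. The only real difference is the choice of free polynomial generators for $\zz[\Lambda]^W$: the paper uses the $W$-orbit sums $\rho_i=\sum_{\lambda\in W(\omega_i)}e^\lambda$ (augmentation $|W(\omega_i)|$), whereas you use the fundamental characters $\chi_i$ of $V(\omega_i)$ (augmentation $\dim V(\omega_i)$). Both families freely generate $\zz[\Lambda]^W$ over $\zz$, so the resulting descriptions of $\overline{(\ker\cc)}$ coincide; your choice has the merit of matching the lemma's stated $d_i=\dim V(\omega_i)$ on the nose (with the orbit-sum generators one gets $|W(\omega_i)|$ a priori, and must observe that the two ideals agree). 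Your explicit justification that every weight of $V(\omega_i)$ lies in $\omega_i+\Lambda_r$, hence $\bar\chi_i=d_i\,e^{\bar\omega_i}$, is exactly the step the paper leaves implicit (there one uses the weaker fact that $W$ acts trivially on $\Lambda/\Lambda_r$, so all orbit elements collapse to $e^{\bar\omega_i}$).
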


\begin{proof}
By the Chevalley Theorem the subring of invariants $\zz[\Lambda]^W$ can be identified with the
polynomial ring $\zz[\rho_1,\ldots,\rho_n]$, where $\rho_i$ is the $i$-th
fundamental representation, i.e.
$$
\rho_i=\sum_{\lambda \in W(\omega_i)}e^\lambda
$$
(here $W(\omega_i)$ denotes the $W$-orbit of the fundamental weight $\omega_i$).

Since $d_i=\epsilon(\rho_i)$, $\ker\cc=(d_1-\rho_1,\ldots,d_n-\rho_n)$.
To finish the proof observe that $\overline{(d_i-\rho_i)}=d_i(1-e^{\bar\omega_i})$.
\end{proof}

\begin{rem}Observe that by definition and \ref{iddescr} we have $\GG_s\otimes\qq\simeq \qq$. \end{rem}

\begin{ntt} In the following examples we compute the ring $\GG_s\simeq K_0(G_s)$ 
for every simple split linear algebraic group $G_s$
(we refer to \cite[\S24]{BI} for the description of $\Lambda/T^*$ and to \cite[Ch.8, Table~2]{Bo} for the dimensions of fundamental representations).

\vspace{1ex}
\noindent
\begin{tabular}{l|l|l}
$\Lambda/T^*$ & $G_s$, $m\ge 1$ & Example \\ \hline
$\zz/m\zz$, $m\ge 2$ & $SL_{n+1}/\mu_m$ & \eqref{exan} \\
$\zz/2\zz$ & $O^+_{m+4}$, $PSp_{2m+2}$, $HSpin_{4m+4}$, $E_7^{ad}$ & \eqref{gex}\\
$\zz/2\zz\oplus\zz/2\zz$ & $PGO^+_{4m+4}$ & \eqref{pgo} \\
$\zz/3\zz$ & $E_6^{ad}$ & \eqref{sl3} \\
$\zz/4\zz$ & $PGO^+_{4m+2}$ & \eqref{pgoodd}
\end{tabular}

\end{ntt}

\begin{ex}\label{exan} 
Consider the case $G_s=SL_{n+1}/\mu_m$, $m\ge 2$. 
The group $G_s$ has type $A_n$ and
$\Lambda/T^*=\langle \sigma\rangle$ is cyclic of order $m$. 
The quotient map $\Lambda/\Lambda_r\to \Lambda/T^*$ sends 
$\bar \omega_i\in \Lambda/\Lambda_r$, $i=1\ldots n$ to 
$(i\; {\rm mod}\; m) \sigma\in  \Lambda/T^*$.

By Definition~\ref{K0def} and Lemma~\ref{iddescr} we have
$$\GG_s\simeq\zz[y]/(1-(1-y)^m, d_1y, \ldots ,d_{m-1}y^{m-1}),$$ 
where $y=(1-e^\sigma)$ and  
$d_j=gcd\{\tbinom{n+1}{i}\mid i\equiv j \mod m,\; i=1\ldots n\}$.

In particular, for $G_s=SL_p/\mu_p=PGL_p$, where $p$ is a prime, we obtain
$$
\GG_s\simeq \zz[y]/(\tbinom{p}{1}y,\tbinom{p}{2}y^2,\ldots,\tbinom{p}{p-1}y^{p-1},y^p).
$$
\end{ex}

\begin{ex}\label{gex}
Assume that $\Lambda/T^*=\langle\sigma\rangle$ has 
order $2$. Then 
$$\GG_s\simeq\zz[y]/(y^2-2y, dy),$$ 
where $y=(1-e^\sigma)$ and $d$ is the g.c.d. of dimensions of representations
corresponding to $\omega_i$ with $\bar\omega_i=\sigma$. 
The integer $d$ can be determined as follows:

\vspace{1ex}
\paragraph{$B_n$:} We have $\Lambda/\Lambda_r=\{0,\bar\omega_n\}\simeq \zz/2\zz$ which corresponds to the
adjoint group $G_s=O^+_{2n+1}$. Since  $\bar\omega_i=0$ for each $i\neq n$, $d$ coincides with the dimension of $\omega_n$
that is $2^n$.

\vspace{1ex}
\paragraph{$C_n$:} We have $\Lambda/\Lambda_r=\{0,\sigma=\bar\omega_1=\bar\omega_3=\ldots\}\simeq \zz/2\zz$ that is $G_s=PSp_{2n}$.
Since $\bar\omega_i=0$ for even $i$, $d$ is the g.c.d. of dimensions of $\omega_1,\omega_3,\ldots$, i.e.
$$d=gcd\big(2n,\tbinom{2n}{3}-\tbinom{2n}{1},\tbinom{2n}{5}-\tbinom{2n}{3},\ldots\big).$$ 

\paragraph{$D_n$:} 
If $n$ is odd, then
$\Lambda/\Lambda_r=\{0,\bar\omega_{n-1},\bar\omega_1,\bar\omega_n\}\simeq \zz/4\zz$, where
$\bar\omega_1=2\bar\omega_{n-1}=2\bar\omega_n$.
Therefore, $\Lambda/T^*\simeq \zz/2\zz$ if it is a quotient of $\Lambda/\Lambda_r$ modulo the subgroup $\{0,\bar \omega_1\}$. In this case $\Lambda/T^*=\{0,\sigma=\bar\omega_{n-1}=\bar\omega_n\}$ which corresponds to
the special orthogonal group $G_s=O^+_{2n}$. Since $\bar\omega_s=s\bar \omega_1$ for $2\le s\le n-2$ and
$\bar\omega_1=0$ in $\Lambda/T^*$, $d$ is the g.c.d. of the dimensions of $\omega_{n-1}$ and $\omega_n$
that is $2^{n-1}$.

\vspace{1ex}
\noindent
If $n$ is even, then
$\Lambda/\Lambda_r=\{0,\bar\omega_{n-1}\}\oplus\{0,\bar\omega_n\}\simeq\zz/2\zz\oplus \zz/2\zz$, where $\bar\omega_1=\bar\omega_{n-1}+ \bar\omega_n$.
In this case, we have two cases for $\Lambda/T^*$:
\begin{enumerate}
\item It is the quotient of $\Lambda/\Lambda_r$
modulo the diagonal subgroup
$\{0,\bar\omega_{n-1}+\bar\omega_n\}$.
Then $\Lambda/T^*=\{0,\sigma=\bar\omega_{n-1}=\bar\omega_n\}$, $G_s=O^+_{2n}$ and $d$ is the same as in the odd case, i.e. $d=2^{n-1}$. 

\item It is the quotient modulo one of the factors, e.g.
$\Lambda/T^*=\{0,\sigma=\bar\omega_{n-1}\}$, where $\bar\omega_n=0$. This corresponds to the half-spin group $G_s=HSpin_{2n}$. We have
$\bar\omega_1=\bar\omega_3=\ldots=\bar\omega_{n-1}$ and $\bar\omega_i=0$ if $i$ is even. 

\noindent
Therefore,
$d=gcd\big(2n,\tbinom{2n}{3},\ldots,\tbinom{2n}{n-3},2^{n-1}\big)$
which implies that $d=2^{v_2(n)+1}$, where $v_2(n)$ denotes the $2$-adic valuation of $n$.
\end{enumerate}

\vspace{1ex}
\paragraph{$E_7$:} We have $\Lambda/\Lambda_r=\{0,\sigma=\bar\omega_7=\bar\omega_5=\bar\omega_2\}\simeq \zz/2\zz$ with $\bar\omega_1=\bar\omega_3=\bar\omega_4=\bar\omega_6=0$.
Therefore, $d=gcd(56, \tbinom{56}{3},912)=8$.
\end{ex}

\begin{ex}\label{pgo}
Assume that $\Lambda/T^*=\langle\sigma_1\rangle \oplus \langle \sigma_2\rangle$, where $\sigma_1$ and $\sigma_2$
are of order 2. 
In this case $G_s=PGO^+_{2n}$ is an adjoint group ($T^*=\Lambda_r$) of type $D_n$ with $n$ even. 
We have $\sigma_1=\bar\omega_{n-1}$ and $\sigma_2=\bar\omega_n$, $\bar\omega_s=s\bar\omega_1$, $2\le s\le n-2$, $2\bar\omega_1=0$ and $\bar\omega_1=\bar\omega_{n-1}+\bar\omega_n$.
Then
$$
\GG_s\simeq \zz[y_1,y_2]/(y_1^2-2y_1, y_2^2-2y_2,d_1y_1,d_2y_2,d(y_1+y_2-y_1y_2)),
$$
where $y_1=(1-e^{\sigma_1})$, $y_2=(1-e^{\sigma_2})$;  $d_1$ (resp. $d_2$) is the g.c.d. of dimensions of $\omega_i$ with $\bar\omega_i=\bar\omega_{n-1}$ (resp. $\bar\omega_i=\bar\omega_n$) that is $d_1=d_2=2^{n-1}$; and $d$ is the g.c.d. of dimensions
of $\omega_1,\omega_3,\ldots,\omega_{n-3}$ that is
$d=gcd\big(2n,\tbinom{2n}{3},\ldots \tbinom{2n}{n-3}\big)$.

In particular, for $G_s=PGO^+_8$ we obtain
$$
\GG_s\simeq \zz[y_1,y_2]/(y_1^2-2y_1,y_2^2-2y_2,8y_1,8y_2,8y_1y_2).
$$
\end{ex}

\begin{ex}\label{sl3}
Assume that $\Lambda/T^*=\langle\sigma\rangle$ has 
order $3$.
Then 
$$\GG_s\simeq\zz[y]/(y^3-3y^2+3y, d_1y, d_2y^2),$$ 
where $y=(1-e^\sigma)$ and
$d_1$ (resp. $d_2$) is the greatest common divisor of dimensions of fundamental representations $\omega_i$, $i=1\ldots n$
such that $\bar\omega_i=\sigma$ (resp. $\bar\omega_i=2\sigma$).

For the adjoint group of type $E_6$ we have $\Lambda/\Lambda_r=\{0,\sigma=\bar\omega_1=\bar\omega_5,2\sigma=\bar\omega_2=\bar\omega_6\}$ with $\bar\omega_2=\bar\omega_4=0$.
Therefore, $d_1=d_2=gcd(27,\tbinom{27}{2})=27$.
\end{ex}

\begin{ex}\label{pgoodd} Assume that $\Lambda/T^*=\langle\sigma\rangle$ has 
order $4$.
Then 
$$\GG_s\simeq\zz[y]/(y^4-4y^3+6y^2-4y, d_1y, d_2y^2, d_3y^3),$$ 
where $y=(1-e^\sigma)$. For the group $PGO^+_{2n}$ where $n$ is odd we have $\sigma=\bar\omega_{n-1}$,
$2\sigma=\bar\omega_1$ and $3\sigma=\bar\omega_n$. Therefore, $d_1=d_3=2^{n-1}$ and 
$d_2=gcd(\tbinom{2n}{1},\tbinom{2n}{3},\ldots,\tbinom{2n}{n-2})$.
\end{ex}

%%%%%%%%%%%%%%%%%%%%%%%%%%%%%%%%%%%%%%
%%%%%%%%%%%%%%%%%%%%%%%%%%%%%%%%%%%%%%%%
%%%%%%%%%%%%%%%%%%%%%%%%%%%%%%%%%%%%%%%

\section{The twisted $\gamma$-filtration.}

In the present section we introduce and study the twisted $\gamma$-filtration.

\begin{ntt}
Let $\gamma=\ker\epsilon$ denote the augmentation ideal in $\zz[\Lambda]$. 
It is generated by the differences
$$
\langle (1-e^{-\lambda}),\; \lambda\in \Lambda \rangle.
$$
Consider the $\gamma$-adic filtration on $\zz[\Lambda]$
$$
\zz[\Lambda]=\gamma^0\supseteq \gamma\supseteq \gamma^2\supseteq \ldots
$$
The $i$-th power $\gamma^i$ is generated by products of at least $i$ differences.
\end{ntt}

\begin{dfn}\label{deffilt} We define the filtration on $K_0(\BB_s)$ (resp. on $\GG_s$) to be the image of the $\gamma$-adic
filtration on $\zz[\Lambda]$ via $\cc$ (resp. via $q$), i.e.
$$
\gamma^iK_0(\BB_s):=\cc(\gamma^i) \text{ and }\gamma^i\GG_s :=q(\gamma^iK_0(\BB_s)),\; i\ge 0.
$$
So that we have a commutative diagram of surjective group homomorphisms
$$
\xymatrix{
\gamma^i \ar[dr] \ar[r]^-{\cc}& \gamma^iK_0(\BB_s)\ar[d]^q \\
 & \gamma^i\GG_s
}
$$
\end{dfn}

\begin{lem} The $\gamma$-filtration on $K_0(\BB_s)$ coincides with the filtration
introduced in Definition~\ref{deffilt}.
\end{lem}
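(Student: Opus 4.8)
The plan is to show that the image filtration $\gamma^i K_0(\BB_s) = \cc(\gamma^i)$ agrees with the Grothendieck $\gamma$-filtration generated by products of Chern classes. First I would recall the classical fact that for any variety the first Chern class of a line bundle $[\LL]$ satisfies $c_1([\LL]) = 1 - [\LL^\vee]$ in $K_0$, so that $c_1([\LL(\lambda)]) = 1 - [\LL(-\lambda)] = \cc(1 - e^{-\lambda})$. Since the augmentation ideal $\gamma = \ker\epsilon \subset \zz[\Lambda]$ is generated by the elements $1 - e^{-\lambda}$, $\lambda \in \Lambda$, this already gives that $\cc(\gamma)$ is contained in $\gamma^1 K_0(\BB_s)$ (the ideal generated by first Chern classes). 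More generally $\gamma^i$, being generated by products of at least $i$ such differences, maps into the ideal generated by products of at least $i$ first Chern classes, which is contained in the Grothendieck $\gamma^i$.

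For the reverse inclusion I would use the fact that $K_0(\BB_s)$ is generated as a ring by the line bundle classes $[\LL(\omega_j)]$, $j = 1,\dots,n$ (this is recalled in the preliminaries), equivalently by the units $[\LL(\omega_j)]$ and their inverses. The key input is that for a line bundle, the higher Chern classes vanish, $c_m([\LL]) = 0$ for $m \ge 2$, and the total Chern class is multiplicative. Using the splitting principle / the standard manipulation with Chern classes, any product $c_{n_1}(b_1)\cdots c_{n_m}(b_m)$ with $\sum n_j \ge i$ can be rewritten, after passing to a suitable filtration-preserving extension where all the $b_j$ split into line bundles, as a $\zz$-linear combination of products of at least $i$ first Chern classes of line bundles pulled back from $\BB_s$ — hence, by the computation above, lies in $\cc(\gamma^i)$. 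Concretely, one writes each $c_{n_j}(b_j)$ as an elementary symmetric function in the Chern roots, each Chern root being $1 - [\mathcal{M}^\vee]$ for a line bundle $\mathcal{M}$; since $K_0(\BB_s)$ has no difficulty here (the classes $[\LL(\omega_j)]$ already generate and each is a line bundle), one can avoid the splitting principle altogether and argue directly: every element of $K_0(\BB_s)$ is a $\zz$-linear combination of monomials in the $[\LL(\omega_j)]^{\pm 1}$, and the total Chern class of such a combination is computed from $c(\,[\LL(\lambda)]\,) = 1 + \cc(1 - e^{-\lambda})$ by the multiplicativity and the formal-group-law-type identities, showing $c_m(b) \in \cc(\gamma^m)$ for all $b$ and all $m$.

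Putting the two inclusions together yields $\gamma^i K_0(\BB_s) = \cc(\gamma^i)$ for all $i \ge 0$, which is the assertion. I would phrase the argument so that it simultaneously records, for later use, that under $\cc$ the first Chern class corresponds to $1 - e^{-\lambda}$ and that $\cc$ is a filtered ring homomorphism from $(\zz[\Lambda], \gamma\text{-adic})$ to $(K_0(\BB_s), \gamma\text{-filtration})$.

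The main obstacle is the reverse inclusion, specifically making precise why a product of higher Chern classes of \emph{arbitrary} classes $b_j \in K_0(\BB_s)$ — not just sums of line bundles — can be expressed inside $\cc(\gamma^i)$. The clean way around this is to observe that, because $K_0(\BB_s)$ is generated by line-bundle classes, one can reduce to the case where each $b_j$ is itself a $\zz$-linear combination (in fact a difference of sums) of line bundles, and then the Whitney sum formula together with $c(\,[\LL]\,) = 1 + c_1([\LL])$ and the standard expansion for $c$ of a class of virtual rank zero shows every Chern class $c_m(b_j)$ already lies in $\gamma^m K_0(\BB_s) = \cc(\gamma^m)$; multiplying $m$ of these and using that $\cc(\gamma^{a})\cc(\gamma^{b}) \subseteq \cc(\gamma^{a+b})$ (immediate since $\gamma$ is an ideal) closes the loop. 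So the proof is essentially a bookkeeping argument with Chern classes of line bundles, and the only care needed is the reduction to line-bundle generators.
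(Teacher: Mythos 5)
Your argument is correct and takes essentially the same route as the paper: reduce the Grothendieck $\gamma$-filtration to products of first Chern classes of line bundles using that $K_0(\BB_s)$ is generated by line-bundle classes, then identify $c_1([\LL(\lambda)]) = 1-[\LL(\lambda)^\vee] = \cc(1-e^{-\lambda})$. The paper compresses the line-bundle reduction into a single assertion, whereas you separate the two inclusions and spell out the Whitney-formula bookkeeping (and correctly note that the splitting principle is unnecessary here since the generators $[\LL(\omega_j)]$ are already line bundles); this is just a more detailed rendering of the same proof.
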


\begin{proof} Since $K_0(\BB_s)$ is generated by the classes of line bundles,
$$
\gamma^iK_0(\BB_s)=\langle c_1([\LL_1])\cdot\ldots \cdot c_1([\LL_m]) \mid m\ge i,\; \LL_j\in K_0(\BB_s)\rangle.
$$
Moreover, each line bundle $\LL$ is the associated bundle $\LL=\LL(\lambda)$ for some character $\lambda\in \Lambda$.
Therefore, $c_1([\LL])=1-[\LL^\vee]=\cc(1-e^{-\lambda})$ (see \cite[\S2.8]{De74}).
\end{proof}

\begin{dfn}\label{twfilt} Given a $G_s$-torsor $\xi\in H^1(k,G_s)$ and the respective twisted form $\BB={}_\xi\BB_s$ 
we define
the twisted filtration on $\GG_s$ to be the image of the $\gamma$-filtration on $K_0(\BB)$ via
the composite $\res\circ q$, i.e.
$$
\gamma_\xi^i\GG_s:=q(\res(\gamma^iK_0(\BB))),\; i\ge 0.
$$
Let $\gamma_\xi^{i/i+1}\GG_s=\gamma_\xi^i\GG_s/\gamma_\xi^{i+1}\GG_s$.
The associated graded ring 
$
\bigoplus_{i\ge 0} \gamma_\xi^{i/i+1}\GG_s
$
will be called the $\gamma$-invariant of the torsor $\xi$ and will be denoted simply as $\gamma_\xi^*$.
\end{dfn}

Note that the Chern classes commute with restrictions, therefore the restriction map $\res\colon \gamma^iK_0(\BB)\to \gamma^iK_0(\BB_s)$ is well-defined. By definition there is a surjective ring homomorphism
$$
\gamma^*(\BB)\twoheadrightarrow \gamma_\xi^*.
$$

\begin{thm}\label{twfiltcomp} The twisted filtration $\gamma_\xi^i\GG_s$ can be computed as follows:
$$
\gamma_\xi^i\GG_s=\langle 
\prod_{j=1}^m \binom{\ind(\beta_\xi(\bar\rho_{w_j}))}{n_j}(1-e^{\bar \rho_{w_j}})^{n_j}\mid n_1+\ldots + n_m \ge i,\; w_j\in W\rangle.
$$
\end{thm}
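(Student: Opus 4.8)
The plan is to compute the image $\res(\gamma^iK_0(\BB))$ inside $K_0(\BB_s)$ first, and then push it forward via $q$. By \ref{titsalg} the image of $\res$ on $K_0(\BB)$ is the sublattice $M:=\langle \imath_w g_w\rangle_{w\in W}$, where $\imath_w=\ind(\beta_\xi(\bar\rho_w))$ and $g_w=[\LL(\rho_w)]=\cc(e^{\rho_w})$. Since the Chern classes commute with restriction, $\res(\gamma^iK_0(\BB))=M\cap \gamma^iK_0(\BB)$? — no, more carefully: $\res$ maps $\gamma^iK_0(\BB)$ onto the ideal of $M$ generated (as a subset of $K_0(\BB_s)$) by products of $i$ Chern classes of bundles whose classes lie in $M$. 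So the first step is: each class in $M$ is of the form $\imath_w[\LL(\rho_w)]$ up to the lattice structure, but for Chern classes we only need line bundles; a line bundle $\LL(\lambda)$ restricted from $\BB$ exists precisely when... actually the cleaner route, following \cite[\S6]{GZ10}, is to observe that $\res(\gamma^iK_0(\BB))$ is generated by products $\prod_j c_1(\mathcal{F}_j)$ where each $\mathcal{F}_j$ is in the image of $\res$, and then to reduce modulo higher filtration using that $c_1$ of a bundle of rank $r$ in the image contributes an ``$r$-divisible'' first Chern class.

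Concretely, I would argue as follows. Over $\BB$ the relevant $K_0$-classes that restrict to multiples of Steinberg basis elements come from the Tits algebras: associated to $\bar\rho_w$ there is a vector bundle on $\BB$ of rank $\imath_w$ whose restriction to $\BB_s$ is $\imath_w\cdot \LL(\rho_w)$ (this is exactly the content of Panin's theorem in \ref{titsalg}, phrased via the Tits algebra $A_{\bar\rho_w,\xi}$ of index $\imath_w$). Taking Chern classes of such bundles and using the splitting-principle identity $c(\imath_w\LL(\rho_w))=(1+c_1(\LL(\rho_w)))^{\cdots}$ is not quite right since $\imath_w\LL(\rho_w)$ is a sum of line bundles only after extension; instead one uses that for a bundle $\mathcal{F}$ of rank $r$ whose restriction is $r[L]$ for a line bundle $L$, one has $c_j(\mathcal{F})\equiv \binom{r}{j}c_1(L)^j \bmod \gamma^{j+1}$, because $c(\mathcal{F})$ and $(1+c_1(L))^r$ have the same image in the associated graded (both equal $c$ of an $r$-fold ``virtual'' sum of $L$). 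Writing $c_1(\LL(\rho_w))=1-e^{-\rho_w}$, and noting $1-e^{-\rho_w}$ and $1-e^{\rho_w}$ differ by a unit times a higher-order term (so generate the same graded pieces — here one should be slightly careful, but $1-e^{-\lambda}=-(1-e^\lambda)e^{-\lambda}$ and $e^{-\lambda}\equiv 1$, so they agree in $\gamma/\gamma^2$ up to sign and the powers agree up to higher filtration), we get that modulo higher filtration the generators of $\gamma_\xi^i\GG_s=q(\res(\gamma^iK_0(\BB)))$ are exactly the products $\prod_{j=1}^m \binom{\imath_{w_j}}{n_j}(1-e^{\bar\rho_{w_j}})^{n_j}$ with $\sum n_j\ge i$, where the bar denotes passing to $\zz[\Lambda/T^*]$ via $q$ (using $q(e^{\rho_w})=e^{\bar\rho_w}$).

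The one genuine subtlety — and the step I expect to be the main obstacle — is showing that no extra elements appear, i.e. that the ideal on the right-hand side is not merely contained in $\gamma_\xi^i\GG_s$ but equal to it. Containment ``$\supseteq$'' is the construction above; for ``$\subseteq$'' one must show every element of $\res(\gamma^iK_0(\BB))$ is, modulo $\gamma^{i+1}$, a $\zz$-combination of the listed products. This requires knowing that $\res(K_0(\BB))=M$ is generated as described and that the $\gamma$-filtration on $K_0(\BB)$ is generated by Chern classes of bundles lying in a controlled generating set of $M$ — one has to check that products of $c_1$'s of the rank-$\imath_w$ bundles above, together with classes pulled back from $K_0(\BB_s)$ that happen to lie in $M$, exhaust $M\cap\gamma^i$ modulo $\gamma^{i+1}$. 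Here I would use that the associated graded of $M$ with respect to the induced filtration is computed by the leading terms $\imath_w(1-e^{\rho_w})^{?}$, invoking the fact (Chevalley / Steinberg basis, as in \S2 and \S3) that $\{g_w\}$ is a basis and that $1-e^{\rho_w}$ generates the top filtration piece contributed by $g_w$. Combining with Lemma~\ref{iddescr} to pass the relations $d_i(1-e^{\bar\omega_i})$ down to $\GG_s$, and using $\bar\omega_i=\bar\rho_{s_{\alpha_i}}$, $\binom{\imath_{s_{\alpha_i}}}{1}=\imath_{s_{\alpha_i}}=d_i$ (the index of the Tits algebra of a fundamental weight being bounded by, and in the generic/split-at-generic-point sense equal to, the dimension $d_i$ of the fundamental representation — more precisely these relations are already forced in $\GG_s$), one sees the presented ideal is well-defined and closes up correctly. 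The bookkeeping of signs ($1-e^{-\lambda}$ versus $1-e^{\lambda}$) and of the binomial coefficients is routine once the structural statement ``leading term of $c_j$ of a rank-$r$ bundle restricting to $r[L]$ is $\binom{r}{j}c_1(L)^j$'' is in place, so I would isolate that as a preliminary lemma and then assemble the two inclusions.
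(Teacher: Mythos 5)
Your plan follows the same route as the paper's proof: identify $\res(K_0(\BB))$ with Panin's lattice $M=\langle\imath_w g_w\rangle_{w\in W}$, compute Chern classes there via the Whitney formula, and push forward via $q$. However, you leave the step you yourself flag as ``the main obstacle'' unresolved, and it does not actually require the machinery you gesture at. Once you know (i) that $\gamma^iK_0(\BB)$ is generated by products $c_{n_1}(b_1)\cdots c_{n_m}(b_m)$ with $b_j\in K_0(\BB)$ and $\sum n_j\ge i$, (ii) that Chern classes commute with $\res$, and (iii) that $\res(b_j)$ is a $\zz$-combination of the $\imath_w g_w$, the Whitney sum formula applied to such a combination expands $c_n(\res(b_j))$ into a $\zz$-combination of products of $c_k(\imath_w g_w)$ -- and $c_k(\imath_w g_w)=\binom{\imath_w}{k}c_1(g_w)^k$ holds \emph{exactly} in $K_0(\BB_s)$, since $\imath_w g_w$ is an honest positive multiple of a line bundle class, not merely modulo $\gamma^{k+1}$. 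So both inclusions are immediate; there is no extra step about leading terms or the associated graded of $M$, and no appeal to Lemma~\ref{iddescr} is needed inside this proof.

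Two smaller slips worth noting. First, your initial claim that $\res(\gamma^iK_0(\BB))$ is generated by products of $c_1(\mathcal{F}_j)$ alone is not justified: that works for $\BB_s$ (Lemma~\ref{elg2}), because $K_0(\BB_s)$ is generated by line bundle classes, but $\Pic(\BB)$ for the twisted form can be very small, so one must start from the full list $c_{n_1}(b_1)\cdots c_{n_m}(b_m)$. You eventually pivot to higher Chern classes, which rescues the argument, but the opening sentence is misleading. Second, your concern about $1-e^{-\bar\rho_w}$ versus $1-e^{\bar\rho_w}$ is resolved more simply than by passing to the associated graded: $(1-e^{-\bar\rho_w})=-e^{-\bar\rho_w}(1-e^{\bar\rho_w})$ and $e^{\bar\rho_w}$ is a unit in $\zz[\Lambda/T^*]$, so the two elements (and all their powers) generate the same principal ideals, making the statement of the theorem unambiguous without any ``up to higher filtration'' hedging.
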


\begin{proof} Since the Chern classes commute with restrictions,
the image of the restriction $\res\colon \gamma^iK_0(\BB)\to \gamma^iK_0(\BB_s)$ is 
generated by the products
$$
\langle c_{n_1}(\imath_{w_1}g_{w_1})\cdot \ldots \cdot c_{n_m}(\imath_{w_m}g_{w_m}) 
\mid n_1+\ldots +n_m\ge i,\; w_1,\ldots,w_m\in W\rangle,
$$
where $\{\imath_{w_j}\}$ are the indexes of the respective Tits algebras from \ref{titsalg}.
Applying the Whitney formula for the Chern classes \cite[\S3.2]{Fu} we obtain
$$
c_j(\imath_w g_w)=\binom{\imath_w}{j}c_1(g_w)^j.
$$
Therefore, $q(\binom{\imath_w}{j}c_1(g_w)^j)=\binom{\imath_w}{j}(1-e^{-\bar\rho_w})^j$, where $\imath_w=\ind(\beta_\xi(\bar \rho_w))$.
\end{proof}

\begin{ex} 
Since $\gamma^0(X)\simeq \zz$ and $\gamma^1(X)=Pic(X)$ is torsion free for every smooth projective $X$, 
we obtain that $\gamma_\xi^0\simeq \zz$ and $\gamma_\xi^1=0$ for any $\xi$.
\end{ex}

\begin{ex}[Strongly-inner case] If $\beta_\xi=0$,
then $\binom{\imath_{w_j}}{n_j}=1$ and
$\gamma_\xi^{i}\GG_s=\gamma^{i}\GG_s$. 
\end{ex}

\begin{ex}[$\zz/2\zz$-case]\label{zz2case}  As in \ref{gex}
assume that $\Lambda/T^*=\langle\sigma\rangle$ has order 2 and $\beta_\xi\neq 0$.
Then there is only one non-split Tits algebra $A=A_{\sigma,\xi}$ and it has exponent $2$.
Let $\ii_A=v_2(\ind(A))$ denote
the 2-adic valuation of the index of $A$.
By definition we have
$$
\gamma_\xi^{i}\GG_s=
\langle \tbinom{2^{\ii_A}}{n_1}\ldots 
\tbinom{2^{\ii_A}}{n_m} 2^{n_1+\ldots +n_m-1}y\mid n_1+\ldots +n_m\ge i\rangle
$$
in $\zz[y]/(y^2-2y,dy)$, where $y=1-e^\sigma$ and $d$ is given in \ref{gex}. 
Observe that modulo the relation $y^2=2y$ these ideals are generated by (for $j\ge 1$)

\vspace{1ex}
\noindent
\begin{tabular}{ll}
$\gamma_\xi^{2j-1}\GG_s=\gamma_\xi^{2j}\GG_s=\langle 2^{2j-1}y\rangle$ & if $\ii_A=1$;\\
$\gamma_\xi^{4j-3}\GG_s=\gamma_\xi^{4j-2}\GG_s=\langle 2^{4j-2}y\rangle$,\;
$\gamma_\xi^{4j-1}\GG_s=\gamma_\xi^{4j}\GG_s=\langle 2^{4j-1}y\rangle$ & if $\ii_A=2$;\\
$\gamma_\xi^{1}\GG_s=\gamma_\xi^{2}\GG_s=
\langle 2^{\ii_A}y\rangle,\;
\gamma_\xi^{3}\GG_s=\gamma_\xi^{4}\GG_s=
\langle 2^{\ii_A+1}y\rangle,
\gamma_\xi^{5}\GG_s=\langle 2^{\ii_A+4}y\rangle \ldots$ & if $\ii_A>2$. 
\end{tabular}

\vspace{1ex}
\noindent
Taking these generators modulo the relation $dy=0$
we obtain the following formulas for the second quotient $\gamma_\xi^2$:
$$
\text{if }\ii_A=1,\text{ then }\gamma_\xi^2=
\begin{cases}
0 & \text{ if } v_2(d)\le 1 \\
\zz/2\zz & \text{ if }  v_2(d)=2\\
\zz/4\zz & \text{ if } v_2(d)\ge 3
\end{cases}
$$
$$
\text{if }\ii_A>1,\text{ then }\gamma_\xi^2=
\begin{cases}
0 & \text{ if } v_2(d)\le \ii_A \\
\zz/2\zz & \text{ if }  v_2(d)>\ii_A
\end{cases}
$$
\end{ex}

\begin{ex}[$\zz/2\zz\oplus\zz/2\zz$-case]\label{z2z2} As in \ref{pgo} assume that $\Lambda/T^*=\langle \sigma_1\rangle\oplus\langle\sigma_2 \rangle$, where $\sigma_1, \sigma_2$ have order 2. This is the case
for the adjoint group $PGO^+_{2n}$ where $n$ is even. 
Assume that $n=4$ which corresponds
to the group of type $D_4$, i.e. $PGO^+_8$. Let 
$C^+$ and $C^-$ denote the Tits algebras corresponding to the generators $\sigma_1=\bar\omega_3$ and $\sigma_2=\bar\omega_4$.
Let $A$ denote the Tits algebra corresponding to the sum $\sigma_1+\sigma_2$.
Note that $C^+\times C^-$ is the even part of the Clifford algebra of the algebra with involution $A$
and $[A]=[C^+\otimes C^-]$ in $Br(k)$.

By definition we have  in $\zz[y_1,y_2]$
$$
\gamma_\xi^i\GG_s=\langle \tbinom{ind\; C_+}{n_1}y_1^{n_1}\cdot \tbinom{ind\; C_-}{n_2}y_2^{n_2}\cdot\tbinom{ind\; A}{n_3}(y_1+y_2-y_1y_2)^{n_3} \mid n_1+n_2+n_3\ge i \rangle.
$$
Modulo the relations $(y_1^2-2y_1,y_2^2-2y_2,8y_1,8y_2,8y_1y_2) $ we obtain that
$$
\gamma_\xi^2\GG_s\simeq 
\frac{(ind\; C_+)\zz}{8\zz}\oplus \frac{(ind\; C_-)\zz}{8\zz} \oplus \frac{(ind\; A)\zz}{8\zz}
$$
\end{ex}

%%%%%%%%%%%%%%%%%%%%%%%%%%%%%%%%%%%%
%%%%%%%%%%%%%%%%%%%%%%%%%%%%%%%%%%%%%%%

\section{Torsion in the $\gamma$-filtration.}

In the present section we show how the twisted $\gamma$-filtration can be used to construct
nontrivial torsion elements in the $\gamma$-ring of the twisted form $\BB$ of a variety of Borel subgroups.

\begin{ntt}
For simplicity we consider only the case of $G_s$ (see Examples \ref{gex} and \ref{zz2case}) with  
$\Lambda/T^*=\langle\sigma\rangle$ of order 2.
Let $d$ denote the g.c.d. of dimensions of 
fundamental representations corresponding to $\sigma$.

Given a $G_s$-torsor $\xi\in H^1(k,G_s)$ let $\ii_A$ denote
the 2-adic valuation of the index of the Tits algebra $A=A_{\sigma,\xi}$.
Let $\BB={}_\xi \BB_s$ denote the twisted form
of the variety of Borel subgroups of $G_s$ by means of $\xi$. 
Consider the respective twisted filtration $\gamma_\xi^i\GG_s$ on $\GG_s$.
\end{ntt}

\begin{prop}\label{hspincase} 
Assume that $v_2(d)>\ii_A\ge 3$.
Then for each $\lambda\in \Lambda$ such that $\bar\lambda=\sigma$
there exists a non-trivial torsion
element of order $2$ in $\gamma^2(\BB)$.
Moreover, its image in $\gamma^2_\xi=\zz/2$ (via $q$) is non-trivial
and in $\gamma^2(\BB_s)$ (via $\res$) is trivial.
\end{prop}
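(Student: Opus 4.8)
The plan is to exhibit an explicit element of $\gamma^2(\BB)$, show its image in $\gamma^2_\xi$ is non-zero (hence it is non-trivial), show it is $2$-torsion, and finally check its restriction to $\gamma^2(\BB_s)$ vanishes. The natural candidate comes from the computation in Example~\ref{zz2case}: under the hypothesis $v_2(d)>\ii_A\ge 3$ we have $\gamma^2_\xi\GG_s=\zz/2\zz$, generated by the class of $2^{\ii_A}y$, where $y=1-e^\sigma$. So I would start from a line bundle on $\BB$ whose Chern class maps, after $\res$ and $q$, to (a unit multiple of) this generator.

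First I would fix $\lambda\in\Lambda$ with $\bar\lambda=\sigma$ and consider $g=[\LL(\lambda)]\in K_0(\BB_s)$. By \ref{titsalg} the element $\ii_\lambda\cdot g$, with $\ii_\lambda=\ind(\beta_\xi(\sigma))$, lies in the image of $\res\colon K_0(\BB)\to K_0(\BB_s)$; since $\ind(\beta_\xi(\sigma))$ has the same $2$-adic valuation $\ii_A$ as $\ind A$ (the algebra $A$ has exponent $2$, so its index is a $2$-power $2^{\ii_A}$), after localizing the relevant computation at $2$ we may take $\ii_\lambda=2^{\ii_A}$ up to an odd unit. Then the Chern class $c_1$ of this bundle on $\BB$ — call a lift $b\in K_0(\BB)$ — satisfies $c_1(b)\in\gamma^1 K_0(\BB)$, and I set $z:=c_1(b)^{\,2}\in\gamma^2 K_0(\BB)$, whose class I denote $\bar z\in\gamma^2(\BB)$. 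Tracking $z$ through $\res$ and then $q$: $\res(c_1(b)) = \ii_\lambda c_1(g)$, so $q(\res(z)) = \ii_\lambda^2(1-e^{-\sigma})^2 = 2^{2\ii_A}(1-e^{-\sigma})^2$ times an odd unit; modulo $y^2=2y$ (note $1-e^{-\sigma}\equiv 1-e^{\sigma}=y$ since $e^{\sigma}$ has order $2$, using $e^{-\sigma}=e^{\sigma}$ in $\zz[\Lambda/T^*]$, $2\sigma=0$) this is $2^{2\ii_A+1}y$, which is $2^{\ii_A}$ times the generator $2^{\ii_A}y$ of $\gamma^2_\xi$. That is $0$ in $\zz/2$ once $\ii_A\ge 1$ — so the naive square is the wrong element.

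The right choice is subtler: I would instead use the Chern class $c_2$ of the rank-$2^{\ii_A}$ bundle directly. Concretely, let $E\in K_0(\BB)$ be a class with $\res(E)=\ii_\lambda\cdot[\LL(\lambda)]$ viewed as a virtual bundle of rank $\ii_\lambda$; by the Whitney formula $c_2(\res E)=\binom{\ii_\lambda}{2}c_1(\LL(\lambda))^2$. Choosing $\ii_\lambda=2^{\ii_A}$ we get $\binom{2^{\ii_A}}{2}=2^{\ii_A-1}(2^{\ii_A}-1)$, whose $2$-adic valuation is exactly $\ii_A-1$. Hence $q(\res(c_2(E)))=2^{\ii_A-1}(2^{\ii_A}-1)\,y^2=2^{\ii_A}(2^{\ii_A}-1)\,y$, an odd multiple of the generator $2^{\ii_A}y$ of $\gamma^2_\xi=\zz/2$. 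Thus the class $\bar z$ of $c_2(E)$ in $\gamma^2(\BB)$ maps to the non-zero element of $\gamma^2_\xi$, which simultaneously proves $\bar z\neq 0$ and that its image under $q$ is non-trivial. The surjection $\gamma^2(\BB)\twoheadrightarrow\gamma^2_\xi$ guarantees such a lift $E$ exists (one may in fact take for $E$ any $K_0(\BB)$-class restricting to $\ii_\lambda[\LL(\lambda)]$, which exists by \ref{titsalg}).

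It remains to check the order and the restriction. For the restriction: $\res(\bar z)$ is the class of $\binom{\ii_\lambda}{2}c_1(\LL(\lambda))^2$ in $\gamma^2(\BB_s)$. Since $\BB_s$ is split, $K_0(\BB_s)=\zz[\Lambda]/\ker\cc$ and $\gamma^2(\BB_s)$ is torsion-free; moreover $\binom{\ii_\lambda}{2}c_1(\LL(\lambda))^2$ is $\ii_\lambda$ times an integral element $\frac{\ii_\lambda-1}{?}\cdots$ — more cleanly, one notes $\binom{\ii_\lambda}{2}c_1(\LL(\lambda))^2 = c_2\big(\ii_\lambda[\LL(\lambda)]\big)$ and $\ii_\lambda[\LL(\lambda)]=\res(E)$ lifts a class whose second Chern class already lies in the image of $\res$; since $\res$ restricted to $\gamma^2$ is injective modulo torsion and $\gamma^2(\BB_s)$ is torsion-free, and since $d\cdot(\text{that class})$ is divisible appropriately... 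Actually the clean argument: in $\GG_s$ we have $dy=0$ with $v_2(d)>\ii_A$, so $2^{\ii_A}y$ is $2$-torsion in $\gamma^2_\xi$ (here $2^{v_2(d)}\mid d$ kills $2^{\ii_A}y$ after one doubling past $\ii_A$, using $2^{\ii_A+1}\mid d$); this yields that $2\bar z$ maps to $0$ in $\gamma^2_\xi$, and combined with $\res(2\bar z)$ lying in the torsion-free group $\gamma^2(\BB_s)$ while being a difference controlled by the kernel of $\res$ on $\gamma^2$ — which is exactly the torsion, by the localization/Pieri description — forces $2\bar z=0$. Finally $\res(\bar z)=0$: the element $c_2(\ii_\lambda[\LL(\lambda)])\in\gamma^2(\BB_s)$ is $\binom{\ii_\lambda}{2}$ times $c_1(\LL(\lambda))^2$, and one shows this vanishes in $\CH^2\otimes$(the relevant quotient) because $c_1(\LL(\lambda))^2$ is already divisible by a factor making $\binom{\ii_\lambda}{2}c_1(\LL(\lambda))^2$ a multiple of the ideal $d y$ that dies in $\GG_s$ — more precisely $\res(\bar z)$ lies in $\ker(q|_{\gamma^2})$ and is simultaneously torsion-free, hence zero.

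The main obstacle is this last paragraph: pinning down that $\res(\bar z)=0$ in $\gamma^2(\BB_s)$ rather than merely in $\gamma^2_\xi$. The honest way is to compute $c_2(\ii_\lambda[\LL(\lambda)])$ in $\CH^2(\BB_s)\otimes\zz_{(2)}$ directly via the known presentation of $\CH^*(\BB_s)$ (or $\gamma^*(\BB_s)$) as a quotient of $\zz[c_1(\LL(\omega_i))]$, and to verify that $\binom{2^{\ii_A}}{2}c_1(\LL(\lambda))^2$ lies in the ideal of relations — equivalently, that $d$ (the g.c.d. of the fundamental-representation dimensions attached to $\sigma$) divides $\binom{2^{\ii_A}}{2}$ in the relevant graded piece; since $v_2(d)>\ii_A > v_2\binom{2^{\ii_A}}{2}=\ii_A-1$, this is where the hypothesis $v_2(d)>\ii_A$ is genuinely used. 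I expect this to reduce to the $2$-primary part of the Steinberg-basis relation $d_i(1-e^{\bar\omega_i})$ from Lemma~\ref{iddescr}, and the inequality $v_2(d)>\ii_A$ to make it go through cleanly.
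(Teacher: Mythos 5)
Your candidate element does not work: it is not torsion, so the proof has a genuine gap precisely at the point you flag as the main obstacle.

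The element you propose is $\bar z = c_2(E)$ with $\res(E)=2^{\ii_A}[\LL(\lambda)]$, so $\res(\bar z)=\binom{2^{\ii_A}}{2}c_1([\LL(\lambda)])^2$ in $\gamma^2(\BB_s)$. The group $\gamma^2(\BB_s)$ for the split flag variety is torsion-free, and $c_1([\LL(\lambda)])^2$ is a nonzero element of it, so $\res(\bar z)\neq 0$ and in fact has infinite order. Since $\gamma^2(\BB_s)$ is torsion-free, this forces $\bar z$ to have infinite order in $\gamma^2(\BB)$ too; it cannot be a torsion class. The confusion in your last two paragraphs is between the relation $dy=0$, which holds only after the drastic quotient $q\colon K_0(\BB_s)\twoheadrightarrow \GG_s$, and vanishing in $\gamma^2(\BB_s)$ itself. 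Knowing $q(\res(\bar z))\in\langle dy\rangle$ (or not, as the case may be) says nothing about $\res(\bar z)$ being zero in $\gamma^2(\BB_s)$; these are computations in entirely different rings. The inequality $v_2(d)>\ii_A>v_2\binom{2^{\ii_A}}{2}$ therefore does not deliver $\res(\bar z)=0$, and there is no reason for $\bar z$ to be torsion.

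The missing idea, which is the heart of the paper's proof, is to exploit the $K_0$-relation $c_1(g)^2=2c_1(g)-c_1(g^2)$ to produce a single element with two useful expressions. Set $\eta=4c_1(g)^3-c_1(g)^4$. On one hand this is visibly in $\gamma^3K_0(\BB_s)$, so its class in $\gamma^2(\BB_s)$ is $0$ for free (no appeal to torsion-freeness or to $d$ needed). On the other hand, expanding via the relation gives $\eta=4c_1(g)^2-c_1(g^2)^2$, and then $2^{\ii_A-3}\eta = c_2(2^{\ii_A}g)-2^{\ii_A-3}c_1(g^2)^2$: both summands lie in $\gamma^2K_0(\BB)$ because $2^{\ii_A}g=\res(E)$ and $g^2=[\LL(2\lambda)]$ with $2\lambda\in T^*$ both come from $K_0(\BB)$ (here $\ii_A\ge 3$ is used so that $2^{\ii_A-3}$ is an integer). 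Finally $q(2^{\ii_A-3}\eta)=2^{\ii_A-1}(1-e^{-\sigma})^2=2^{\ii_A}y\neq 0$ in $\gamma^2_\xi$, using $v_2(d)>\ii_A$ — this is the (only) place the hypothesis on $d$ enters. So the element you want is not any single Chern class of a rank-$2^{\ii_A}$ bundle, but the specific combination $2^{\ii_A-3}\eta$, whose two faces simultaneously witness membership in $\gamma^3$ over the split form and membership in $\gamma^2$ over the twisted form.

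A smaller point: your computations of the images under $q\circ\res$ of $c_1(b)^2$ and of $c_2(E)$, and the correct identification of $\gamma^2_\xi\simeq\zz/2$ with generator $2^{\ii_A}y$, are fine. The error is only in the choice of candidate and in the claim that it restricts to zero.
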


\begin{proof} The proof of this result was inspired 
by the proof of \cite[Prop.4.13]{Ka98}.

Let $g=[\LL(\lambda)]$ denote the class of the associated line bundle.
Using the formula for the first Chern class of a tensor product of line bundles for $K_0$ we obtain
$$
c_1(g)^2=2c_1(g)-c_1(g^2).
$$
Hence,
$$
c_1(g)^4=(2c_1(g)-c_1(g^2))^2=4c_1(g)^2-4c_1(g)c_1(g^2)+c_1(g^2)^2.
$$
Therefore,
$$
\eta=4c_1(g)^3-c_1(g)^4=4c_1(g)^2-c_1(g^2)^2\in\gamma^3K_0(\BB_s).
$$ 

We claim that the class of
$2^{\ii_A-3}\eta$ gives the desired torsion element. 

Indeed, 
$c_1(g^2)=c_1([\LL(2\lambda)])$. 
Since $2\lambda\in T^*$, $[\LL(2\lambda)] \in \cc(T^*)$ and, therefore, by \cite[Cor.3.1]{GiZ} 
$c_1(g^2)\in\gamma^1K_0(\BB)$.
Moreover, we have $2^{\ii_A-1}c_1(g)^2=c_2(2^{\ii_A}g)$, where $2^{\ii_A}g \in K_0(\BB)$.
Hence, $2^{\ii_A-1}c_1(g)^2 \in \gamma^2K_0(\BB)$.
Combining these together we obtain that $2^{\ii_A-3}\eta\in \gamma^2K_0(\BB)$.

Now since $2^{\ii_A-3}\eta\in \gamma^2K_0(\BB)$ its image in $\gamma_\xi^2\GG_s$  can be computed as
$$
q(2^{\ii_A-3}\eta)=2^{\ii_A-3}q(\eta)=2^{\ii_A-1}q(c_1(g)^2)=2^{\ii_A-1}(1-e^{-\sigma})^2=2^{\ii_A}y.
$$
But $q(2^{\ii_A-3}\eta)\notin \gamma_\xi^3\GG_s=\langle 2^{\ii_A+1}y\rangle$.
Therefore, $2^{\ii_A-3}\eta\notin \gamma^3K_0(\BB)$.

From the other hand side $2^{\ii_A-2}\eta=2^{\ii_A}c_1(g)^3+2^{\ii_A-2}c_1(g)^4$ is in $\gamma^3K_0(\BB)$.
So the class of $2^{\ii_A-3}\eta$ gives the desired torsion element of order 2.
\end{proof}

\begin{ex}\label{hspinex} Let $G_s=HSpin_{2n}$ be a half-spin group of rank $n\ge 4$.
So $G_s$ is of type $D_n$, where $n$ is even, 
$\Lambda/T^*=\langle \sigma=\bar\omega_1\rangle\simeq \zz/2\zz$ and according to Example~\ref{gex}
we have $d=2^{v_2(n)+1}$. 
Let $\xi \in H^1(k,G_s)$ be a non-trivial torsor. Then
there is only one Tits algebra $A=A_{\sigma,\xi}$; 
it has exponent $2$ and
index $2^{\ii_A}$ such that $\ii_A\le v_2(n)+1$.

Recall that each such torsor corresponds to an algebra with orthogonal involution $(A,\delta)$ with trivial
discriminant and trivial component of the Clifford algebra.
The respective twisted form $\BB={}_\xi \BB_s$ then corresponds to the variety of Borel
subgroups of the group $PGO^+(A,\delta)$.

Applying the proposition to this case we obtain that
for any such algebra $(A,\delta)$ where $8\mid \ind(A)$ and $A$ is non-division,
there exists a non-trivial torsion element of order 2 in $\gamma^2(\BB)$ that vanishes over a splitting field of $(A,\delta)$.
\end{ex}

\begin{lem}\label{elg2} The $\gamma$-filtration on $K_0(\BB_s)$ is generated by the first Chern classes 
$c_1([\LL(\omega_i)])$, $i=1\ldots n$, i.e.
$$
\gamma^iK_0(\BB_s)=
\langle \prod_{j\in 1\ldots n}c_1([\LL(\omega_j)]) \mid \text{the number of elements in the product }\ge i \rangle.
$$
In particular, the second quotient $\gamma^2(\BB_s)$ is additively generated by the products
$$
\gamma^2(\BB_s)=\langle c_1([\LL(\omega_i)])c_1([\LL(\omega_j)])\mid i,j\in 1\ldots n\rangle.
$$
\end{lem}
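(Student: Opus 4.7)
The plan is to reduce the Chern-class generators of the $\gamma$-filtration to those coming only from fundamental weights, by expressing every first Chern class $c_1([\LL(\lambda)])$, $\lambda\in\Lambda$, as a polynomial with zero constant term in the basic classes $x_j:=c_1([\LL(\omega_j)])$, $j=1,\ldots,n$.

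First I would invoke the lemma immediately following Definition~\ref{deffilt}, whose proof (using that $K_0(\BB_s)$ is generated by classes of line bundles, each of the form $\LL(\lambda)$ for some $\lambda\in\Lambda$) yields
\[
\gamma^i K_0(\BB_s) \;=\; \langle\, c_1([\LL(\lambda_1)])\cdots c_1([\LL(\lambda_m)]) \mid m\geq i,\; \lambda_k\in\Lambda \,\rangle.
\]
The inclusion $\supseteq$ of the desired equality is then immediate, as each $x_j$ lies in $\gamma^1 K_0(\BB_s)$.

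For the reverse inclusion I would exploit the multiplicativity identity
\[
c_1([\LL_1\otimes\LL_2]) \;=\; c_1([\LL_1]) + c_1([\LL_2]) - c_1([\LL_1])\,c_1([\LL_2]),
\]
which follows directly from $c_1([\LL])=1-[\LL^\vee]$. Combined with the nilpotency of the augmentation ideal of $K_0(\BB_s)$, which allows one to expand $[\LL(\omega_j)]=(1-x_j)^{-1}$ as a finite polynomial in $x_j$ and hence to write $c_1([\LL(-\omega_j)])=1-[\LL(\omega_j)]$ as a polynomial in $x_j$ with zero constant term, an induction on $\sum_j|a_j|$ for $\lambda=\sum_j a_j\omega_j$ shows that every $c_1([\LL(\lambda)])$ equals some polynomial $P_\lambda(x_1,\ldots,x_n)$ with $P_\lambda(0)=0$. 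Expanding now a product of $m\geq i$ such Chern classes produces a $\zz$-linear combination of monomials in the $x_j$'s of total degree at least $m\geq i$, which are precisely the generators claimed in the first part of the statement.

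For the second assertion, recall that $\gamma^2(\BB_s)=\gamma^2 K_0(\BB_s)/\gamma^3 K_0(\BB_s)$ and apply the first part with $i=2$: any monomial of degree $\geq 3$ in the $x_j$'s is, by definition, a product of at least three first Chern classes of line bundles and therefore lies in $\gamma^3 K_0(\BB_s)$, vanishing in the quotient, so only the products $x_i x_j = c_1([\LL(\omega_i)])\,c_1([\LL(\omega_j)])$ survive as additive generators. The main obstacle is the clean treatment of negative exponents when rewriting $c_1([\LL(\lambda)])$ in terms of the $x_j$'s; this is routine given the nilpotency of first Chern classes on the cellular variety $\BB_s$, but one must verify with care that the resulting polynomial still has zero constant term after the substitutions.
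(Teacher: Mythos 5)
Your proof is correct and essentially matches the paper's: both reduce the $\gamma$-filtration to products of first Chern classes of line bundles $\LL(\lambda)$, $\lambda\in\Lambda$, and then use the tensor-product formula for $c_1$ to rewrite each such class as a polynomial with zero constant term in the $x_j=c_1([\LL(\omega_j)])$. The only difference is organizational: the paper routes through the Steinberg basis $\{g_w\}_{w\in W}$ before expanding each $\rho_w$ in fundamental weights, whereas you work directly with arbitrary $\lambda\in\Lambda$ and spell out the treatment of negative coefficients via nilpotency of the $x_j$, a point the paper leaves to the citation of \cite[8.2]{CPZ}.
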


\begin{proof}
Each $b\in K_0(\BB_s)$ can be written
as a linear combination $b=\sum_{w\in W} a_wg_w$.
Therefore, any Chern class of $b$ can be expressed in terms of $c_1(g_w)$.

Each $\rho_w$ can be written uniquely as a linear combination of fundamental weights $\{\omega_1,\ldots,\omega_n\}$.
Therefore, by the formula for the Chern class of the  tensor product of line bundles \cite[8.2]{CPZ}, each $c_1(g_w)$ can
be expressed in terms of $c_1([\LL(\omega_i)])$.
\end{proof}

\begin{ex}\label{e7ex}
Let $G_s$ be an adjoint group of type $E_7$ and let $\xi\in H^1(k,G_s)$ be a non-trivial $G_s$-torsor.
Then there is only one nonsplit Tits algebra $A=A_{\sigma,\xi}$ of exponent 2 and $\ii_A\le 3$.
Let $\BB={}_\xi \BB_s$ be the respective twisted flag variety.

By Lemma~\ref{elg2} any element of $\gamma^2(\BB)$ can be written as
$$
x=\sum_{ij} a_{ij}c_1([\LL(\omega_i)])c_1([\LL(\omega_j)]) \in \gamma^2(\BB)
$$
for certain coefficients $a_{ij}\in \zz$.
Since $\sigma=\bar\omega_7=\bar\omega_5=\bar\omega_2$ and $\bar\omega_1=\bar\omega_3=\bar\omega_4=\bar\omega_6=0$, we obtain that
$$
q(x)=C\cdot 2y \in \gamma_\xi^2, \text{ where } C=a_{25}+a_{27}+a_{57}+a_{22}+a_{55}+a_{77}.
$$
Therefore, $q(x)\neq 0$ in $\gamma^2_\xi$ if and only if $4\nmid C$ and $\ii_A\le 2$.

Consider the class $\cc(\theta) \in \gamma^2K_0(\BB_s)$ of the special cycle 
$\theta$ constructed in \cite[Def.3.3]{GZ10}. Note that the image of $\theta$ in $CH^2(\BB)$ can be viewed as a generalization
of the Rost invariant for split adjoint groups (see Remark~\ref{cohinv}).

If $\ii_A=1$, then by \cite[Prop.6.5]{GZ10} we know that $\cc(\theta)\in \gamma^2(\BB)$ is a non-trivial torsion element.
If $\ii_A=2$, then following the proof of \cite[Prop.6.5]{GZ10} we obtain that $2\cc(\theta) \in \gamma^2(\BB)$.

We claim that if $\ii_A\le 2$, then $x=2\cc(\theta)$ is non-trivial.
Indeed, in this case $4\nmid C=a_{22}+a_{55}+a_{77}=6$, therefore, we have $q(x)\neq 0$, and $x\neq 0$ in $\gamma^2(\BB)$.
In particular, this shows that for $\ii_A=1$ the order of the special cycle $\theta$ in $\gamma^2(\BB)$ is divisible by 4.
\end{ex}

\begin{ex} Let $\xi\in H^1(k,PGO_8^+)$. Applying the same arguments as in \ref{e7ex} to Example~\ref{z2z2} 
we obtain that  if $ind(A), ind(C_+), ind(C_-)\le 4$, then  $2\cc(\theta) \in \gamma^2(\BB)$ is non-trivial.
\end{ex}

We finish by the following remark that 
provides another motivation for the study of the torsion part of $\gamma^*(\BB)$

\begin{rem}\label{cohinv} Recall that by the Riemann-Roch theorem the second Chern class induces a surjection 
$c_2\colon Tors\, \gamma^2(\BB) \twoheadrightarrow Tors \CH^2(\BB)$ \cite[Cor.2.15]{Ka98}, where the latter group is isomorphic to the cohomology quotient \cite[Thm.2.1]{Pe98}
$$\frac{\ker(H^3(k,\qq/\zz(2)) \to H^3(k(\BB),\qq/\zz(2))}{\oplus_{\chi\in \Lambda/\Lambda_r} N_{k_\chi/k}(k_\chi^*\cup \beta_\xi(\chi))},$$ 
where $k_\chi$ denotes the fixed subfield of $\chi$. Therefore, the group $Tors\, \gamma^2(\BB)$ can be viewed as an upper bound for
the group of cohomological invariants of $G_s$-torsors in degree $3$.
\end{rem}

\smallskip
\noindent{\small{\textbf{Acknowledgments.} I am grateful to Alexander Merkurjev for pointing out Remark~\ref{kog}.
This work was supported by NSERC Discovery 385795-2010 and 
Accelerator Supplement 396100-2010 grants.

\bibliographystyle{amsplain}

\end{document}